\theoremstyle{plain}
\newtheorem{theorem}{Theorem}
\newtheorem{lemma}[theorem]{Lemma}
\newcommand{\comment}[1]{}
\newcommand{\nat}{\ensuremath{\mathbf{N}}\xspace}
\newcommand{\intd}[1]{\,\mathrm{d}#1 \,}
\newcommand{\floor}[1]{\left\lfloor #1 \right\rfloor}
\newcommand{\hmeas}{\ensuremath{\mathcal{H}}\xspace}
\renewcommand{\P}{\ensuremath{\mathbf{P}}\xspace}
\newcommand{\oball}[2]{\ensuremath{B{\left(#1, #2\right)}}\xspace}
\DeclareMathOperator{\diam}{diam}
\DeclareMathOperator{\dimh}{dim_H}
\DeclareMathOperator*{\E}{\mathbf{E}}
\title{Hausdorff dimension of random limsup sets of balls in the unit cube}
\author{Fredrik Ekstr\"om}
\begin{document}
\maketitle

\noindent
An expanded version of this preprint was published in the American Mathematical
Monthly~\cite{Ekstrom2019}.

Fix a sequence $(r_n)$ of positive numbers and let $\omega = (\omega_n)$
be a sequence of points in $[0, 1]^d$, chosen randomly and independently
according to the uniform probability measure. Let
	\[
	E = \limsup_n \oball{\omega_n}{r_n};
	\]
this is the set of points that are covered infinitely many times
by the balls $\{\oball{\omega_n}{r_n}\}$. The purpose of this note
is to give a new simple proof of the following theorem
about the Hausdorff dimension of $E$. Let
	\[
	t_0 = \inf\left\{t; \, \sum_n r_n^t < \infty \right\}
	= \sup\left\{t; \, \sum_n r_n^t = \infty \right\}.
	\]

\begin{theorem} \label{thm1}
Almost surely, $\dimh E = \min\left(t_0, d\right)$.
\end{theorem}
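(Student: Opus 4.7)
\emph{Upper bound.} Only $\dimh E \leq t_0$ needs comment, since $\dimh E \leq d$ is automatic. For any $t > t_0$, the series $\sum_n r_n^t$ converges, and $\{\oball{\omega_n}{r_n}\}_{n \geq N}$ covers $E$ for every $N$, with $\sum_{n \geq N}(2 r_n)^t \to 0$. Hence $\hmeas^t(E) = 0$, and letting $t \downarrow t_0$ gives the bound.

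\emph{Lower bound.} Fix $s < \min(t_0, d)$. The plan is to build almost surely a Borel measure $\mu$ with $\mu(E) > 0$ and finite $s$-energy $I_s(\mu) = \iint |x-y|^{-s}\, d\mu(x)\, d\mu(y)$, from which the energy form of Frostman's lemma delivers $\dimh E \geq s$. Group indices dyadically as $I_k = \{n : r_n \in (2^{-k-1}, 2^{-k}]\}$. Since $\sum_n r_n^s = \infty$ but $\sum_n r_n^{s'} < \infty$ for some $s' \in (s, t_0)$, the block sum $\Sigma_k := \sum_{n \in I_k} r_n^s$ lies in a fixed interval $[c_1, c_2]$ for infinitely many $k$; I restrict to such a subsequence of blocks, which use disjoint subsets of $(\omega_n)$ and are therefore independent.

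On each selected block form the random measure
$$
\mu_k = \sum_{n \in I_k} r_n^{s-d}\, \mathbf{1}_{B(\omega_n, r_n)}\, d\lambda,
$$
where $\lambda$ is Lebesgue measure. A first-moment computation gives $\E \mu_k([0,1]^d) \asymp \Sigma_k \asymp 1$. For the expected $s$-energy, the diagonal ($n=m$) contribution is $O(r_n^{2d-s})$ per ball and sums to $O(\Sigma_k)$; the off-diagonal contribution factorises by independence of $\omega_n$ and $\omega_m$ and, since $|x-y|^{-s}$ is integrable on $[0,1]^{2d}$ for $s<d$, is $O(\Sigma_k^2)$. Hence $\E I_s(\mu_k) = O(1)$, so Paley--Zygmund on the mass together with Markov on the energy produce, uniformly in $k$, a probability-bounded-below event on which $\mu_k([0,1]^d) \geq c$ and $I_s(\mu_k) \leq D$. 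Independence across blocks plus Borel--Cantelli force this good event to occur for infinitely many $k$ almost surely, and a further weak-$*$ convergent subsequence yields a limit $\mu$ with $I_s(\mu) \leq D$ by Fatou's inequality.

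\emph{The main obstacle} is to transfer the mass of $\mu$ onto $E$ itself. Because $r_n \to 0$, the selected blocks eventually consist only of indices $n \geq N$, so $\mu_k$ is eventually supported in the open set $U_N = \bigcup_{n \geq N} \oball{\omega_n}{r_n}$; portmanteau applied to the closed set $\overline{U_N}$ together with continuity from above yields $\mu\bigl(\bigcap_N \overline{U_N}\bigr) \geq c$. Since $\bigcap_N \overline{U_N}$ may strictly contain $E = \bigcap_N U_N$, deducing $\mu(E) \geq c$ requires an additional argument: for example, replacing $B(\omega_n, r_n)$ throughout by a slightly smaller concentric ball so that $\supp \mu_k$ sits at positive distance from $\partial U_N$, or a direct density estimate on $\mu_k$ using that it is absolutely continuous with respect to Lebesgue measure. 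Finally, $\{\dimh E \geq s\}$ is a tail event for the i.i.d.\ sequence $(\omega_n)$ (the $\limsup$ is invariant under changing finitely many terms), so Kolmogorov's zero--one law upgrades positive probability to probability one, and letting $s \uparrow \min(t_0, d)$ completes the argument.
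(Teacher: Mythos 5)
Your upper bound is fine and is exactly the paper's argument (cover $E$ by a tail of the sequence). The lower bound, however, takes a genuinely different route from the paper, and it has a fatal gap precisely at the step you flag as ``the main obstacle'' --- and your proposed repairs do not address it. Almost surely the i.i.d.\ uniform centres $(\omega_n)$ are dense in $[0,1]^d$, so for every $N$ the closure $\overline{U_N}$ contains $\overline{\{\omega_n;\, n \geq N\}} = [0,1]^d$; hence $\bigcap_N \overline{U_N} = [0,1]^d$ almost surely and your portmanteau estimate $\mu\bigl(\bigcap_N \overline{U_N}\bigr) \geq c$ is vacuous. Shrinking to concentric half-balls does not help: the closure of a union of infinitely many shrinking balls around a dense set of centres is again everything, so the defect is not a boundary effect of a fixed $U_N$ but the fact that single-scale blocks carry no mechanism tying the weak-$*$ limit to $E$ across scales. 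The danger is real, not hypothetical: when $t_0 < d$ one has $\sum_n r_n^d < \infty$, so $E$ is Lebesgue-null almost surely, while your $\mu_k$ are absolutely continuous with densities that spread out as $k \to \infty$; nothing prevents the limit $\mu$ from being comparable to Lebesgue measure and assigning $E$ mass zero, energy bound and all. The known ways to close this gap are genuinely multiscale: nest the construction so that the stage-$(j+1)$ measure is supported inside the balls already used at stage $j$ (the Cantor-set construction of Fan--Wu, or the mass-transference/large-intersection arguments of Durand, Persson, and J\"arvenp\"a\"a et al.\ cited in the paper), which is substantially more work than a weak-$*$ limit. The paper avoids energies altogether: it models the cube by the symbolic space $\Sigma$, splits $\Sigma = \Sigma_I \times \Sigma_J$ with $\Sigma_I$ chosen $t$-regular (Lemma~\ref{regularlemma}), applies Borel--Cantelli fibrewise to show that almost surely the fibre $E^{\sigma_1}$ is residual --- in particular nonempty --- for $\mu_I$-a.e.\ $\sigma_1$ (Lemma~\ref{ABlemma}), and then gets $\dimh E \geq \dimh(\pi_I E) \geq \dimh B \geq t$ by projection; the divergence of $\sum_n r_n^t$ is converted into divergence of $\sum_n \mu_I\bigl(\oball{\sigma_1}{r_n}\bigr)$, so no measure on $E$ is ever constructed.

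Two smaller points. Your block-selection justification is misstated: for $s' \in (s, t_0)$ the series $\sum_n r_n^{s'}$ \emph{diverges} --- that is the definition of $t_0$ --- and in fact the block sums $\Sigma_k$ need not lie in a fixed interval infinitely often (they may tend to infinity). This is repairable: divergence at some $s' \in (s, t_0)$ forces $\Sigma_k \geq 1$ for infinitely many $k$, and since each term $r_n^s \leq 2^{-ks} \leq 1$ you can trim each such block to a sub-block with sum in $[1, 2]$. Also, Paley--Zygmund is unnecessary, since $\mu_k([0,1]^d) \asymp \Sigma_k$ deterministically (each ball meets the cube in volume $\asymp r_n^d$); Markov on the energy alone gives your uniform good event, and the zero--one law step is correct as stated. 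But these are cosmetic next to the mass-transfer gap, which as written leaves the lower bound unproved.
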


This was proved for $d = 1$ by Fan and Wu~\cite{FanWu2004}, by constructing a
Cantor set inside $E$ and estimating its dimension. Durand~\cite{Durand2010}
gave another proof that works for arbitrary $d$, although he only
stated the theorem for $d = 1$. The argument uses ``homogeneous ubiquity''
to show that almost surely $E$ lies in a class of sets introduced by
Falconer~\cite{Falconer1994}, having the property that every countable intersection
of sets from the class has large Hausdorff dimension. A similar
argument
(see \cite[Proposition~4.7]{JarvenpaaJarvenpaaKoivusaloLiSuomala2014}
and ~\cite[Theorem~2.1]{JarvenpaaJarvenpaaKoivusaloLiSuomalaXiao2017})
using the mass transference principle~\cite{BeresnevichVelani2006}
instead of homogeneous ubiquity proves the analogue of Theorem~\ref{thm1}
in general Ahlfors regular spaces\,---\,Theorem~\ref{thm1} and
Theorem~\ref{thm2} below are special cases of this general result.
Using large intersection classes, Persson~\cite{Persson2015} proved a lower bound
for the limsup set of a sequence of randomly translated open sets in the
$d$-dimensional torus, which again implies Theorem~\ref{thm1}.
There are several other generalisations and variations of the theorem,
some of which can be found
in~\cite{EkstromJarvenpaaJarvenpaaSuomala2018, EkstromPersson2018,%
FengJarvenpaaJarvenpaaSuomala2018, Seuret2018, Zhang2012}.

The upper bound for $\dimh E$ is easily proved, using that every tail
of the sequence of balls covers $E$. The lower bound is easy
as well in case the sum in the definition of $t_0$ diverges for $t = d$:
By Borel--Cantelli's lemma any given point is almost
surely covered infinitely many times, and it follows
via Fubini's theorem that $E$ almost surely has full measure in
$[0, 1]^d$, hence dimension $d$. The present proof is based on a reduction
to a similar situation also when the sum diverges for $t < d$,
by viewing $[0, 1]^d$ as a product of a $t$-dimensional and a
$(d - t)$-dimensional space.

The letter $E$ will be used to denote a random limsup set of a sequence
of balls of radii $(r_n)$ also in a general metric probability space.
Of particular interest is the space $\Sigma = \{ 1, \ldots, b\}^\nat$,
consisting of the infinite strings on the symbols $1, \ldots, b$. It has
a metric given by
	\[
	d\left(\sigma, \widetilde \sigma \right) =
	\gamma\rho^{\min\{n; \, \sigma_n \neq \widetilde\sigma_n\}},
	\]
where $\gamma$ is a positive constant and $\rho$ is a parameter in $(0, 1)$,
and a uniform probability measure given by
	\[
	\mu = \left( \frac{1}{b} \sum_{q = 1}^{b} \delta_q \right)^\nat.
	\]
Let $\alpha = \log b / (-\log \rho)$. It is not too difficult to see that 
$\Sigma$ is \emph{$\alpha$-regular}, meaning that there is a constant $c \geq 1$
such that for every $\sigma$ and every $r \in (0, \diam \Sigma]$,
	\[
	c^{-1} r^\alpha \leq \mu\left(\oball \sigma r\right) \leq c r^{\alpha}.
	\]
This implies that $\dimh \Sigma = \alpha$.

Theorem~\ref{thm1} is a consequence of its counterpart in $\Sigma$.

\begin{theorem} \label{thm2}
In the space $\Sigma$ described above, almost surely
$\dimh E = \min\left(t_0, \alpha \right)$.
\end{theorem}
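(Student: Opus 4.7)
For any $s \in (t_0, \alpha]$, the balls $\{\oball{\omega_n}{r_n}\}_{n \geq N}$ cover $E$ and satisfy $\sum_{n \geq N} r_n^s \to 0$; by $\alpha$-regularity this gives $\hmeas^s(E) = 0$, while $E \subseteq \Sigma$ gives $\dimh E \le \alpha$. Hence $\dimh E \le \min(t_0, \alpha)$ deterministically.

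\textbf{Easy lower bound ($t_0 \ge \alpha$).} Here $\sum r_n^\alpha = \infty$ and $\mu(\oball{\sigma}{r_n}) \asymp r_n^\alpha$, so for each fixed $\sigma \in \Sigma$ the independent events $\{\sigma \in \oball{\omega_n}{r_n}\}$ have divergent probability sum; Borel--Cantelli gives $\sigma \in E$ almost surely, Fubini gives $\mu(E) = 1$ almost surely, and $\dimh E = \alpha$ follows.

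\textbf{Hard case ($t_0 < \alpha$).} Fix $t < t_0$; the plan is to show $\dimh E \ge t$ almost surely by reducing to the easy case via a product decomposition. Grouping symbols into blocks of length $m$ makes $\Sigma$ isometric to $\{1,\ldots,b^m\}^\nat$ with parameter $\rho^m$; for suitable $m$ and a factor $b_1 \mid b^m$, one can arrange $\alpha_1 := \log b_1 / (-m \log \rho) \in (t, t_0)$, since the achievable ratios $\alpha_1/\alpha = \log_{b^m} b_1$ become dense in $[0,1]$ as $m \to \infty$. The identification $\{1,\ldots,b^m\} \cong \{1,\ldots,b_1\} \times \{1,\ldots,b_2\}$ (with $b_2 = b^m/b_1$) induces $\Sigma \cong \Sigma_1 \times \Sigma_2$, where each $\Sigma_i = \{1,\ldots,b_i\}^\nat$ is $\alpha_i$-regular with the same metric recipe, $\alpha_1 + \alpha_2 = \alpha$, and\,---\,crucially\,---\,every ball in $\Sigma$ factors as a product of cylinders of the same depth in each factor. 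Writing $\omega_n = (\omega_n^{(1)}, \omega_n^{(2)})$, the projected balls $\oball{\omega_n^{(1)}}{r_n}$ in $\Sigma_1$ have independent uniform centers and radii satisfying $\sum r_n^{\alpha_1} = \infty$, so by the easy case applied inside $\Sigma_1$ the set $E_1 := \limsup_n \oball{\omega_n^{(1)}}{r_n}$ has full $\mu_1$-measure almost surely.

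\textbf{Transferring dimension; the main obstacle.} The projection $\pi_1 : \Sigma \to \Sigma_1$ is $1$-Lipschitz, so $\dimh E \ge \dimh \pi_1(E)$, and it would suffice to show $\mu_1(\pi_1(E)) = 1$ almost surely. By Fubini this reduces to showing that for each fixed $\sigma_1 \in \Sigma_1$, the event $\{\sigma_1 \in E_1 \setminus \pi_1(E)\}$ has probability zero; equivalently, conditional on $\sigma_1 \in E_1$, the $\Sigma_2$-slice $\limsup_{n \in I_{\sigma_1}} \oball{\omega_n^{(2)}}{r_n}$, where $I_{\sigma_1} := \{n : \sigma_1 \in \oball{\omega_n^{(1)}}{r_n}\}$, is almost surely non-empty. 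Establishing this slice non-emptiness is the crux I anticipate: the conditional expected $\mu_2$-mass of the slice is $\asymp \sum r_n^\alpha < \infty$, so straightforward Borel--Cantelli yields only measure zero. Non-emptiness must instead be read off from the ultrametric structure of $\Sigma_2$, in which the cylinders for $n \in I_{\sigma_1}$ occupy vertices of the $b_2$-ary tree and the slice is non-empty iff some infinite ray passes through infinitely many occupied vertices\,---\,a branching-process-flavored assertion whose verification I expect to be the main technical step.
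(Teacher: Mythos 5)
Your upper bound, easy case, and reduction framework are all sound, and the reduction is in fact essentially the paper's: it decomposes $\Sigma = \Sigma_I \times \Sigma_J$ along a subset $I = \left\{\floor{\alpha k / t}; \, k \in \nat\right\}$ of coordinates so that $\Sigma_I$ is $t$-regular (its Lemma~\ref{regularlemma}), where you instead block symbols and factor $b^m = b_1 b_2$; both constructions produce a regular first factor of dimension close to $t$ in which balls of $\Sigma$ split as products, and both reduce the theorem, via the same Fubini step, to showing that for fixed $\sigma_1$ the slice $\limsup_{n \in I_{\sigma_1}} \oball{\omega_n^{(2)}}{r_n}$ is almost surely non-empty. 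But your proposal stops exactly at that decisive step. Flagging it as a ``branching-process-flavored assertion'' to be verified later is not a deferrable technicality but the entire content of the theorem beyond the easy case: as you yourself observe, first-moment bounds show the slice has $\mu_2$-measure zero, and a survival argument for the induced process on the $b_2$-ary tree would require second-moment control of the random index set $I_{\sigma_1}$ and of overlaps among occupied cylinders, none of which you set up. (Two smaller points: conditioning on $\{\sigma_1 \in E_1\}$ is vacuous, since for fixed $\sigma_1$ that event already has probability one; and your blocked identification of $\Sigma$ with $\{1, \ldots, b^m\}^\nat$ is bi-Lipschitz rather than isometric, which is harmless for dimension but should be said.)

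The missing idea, which closes the gap in a few lines (the paper's Lemma~\ref{ABlemma}), is Baire category rather than any branching process. Fix $\sigma_1$ and an open $V \subseteq \Sigma_2$, and let $G_n$ be the joint event that $\sigma_1 \in \oball{\omega_n^{(1)}}{r_n}$ and $\omega_n^{(2)} \in V$; these events are independent with $\P(G_n) = \mu_1\big(\oball{\sigma_1}{r_n}\big)\,\mu_2(V)$, and the sum diverges by the lower regularity of the first factor, so the second Borel--Cantelli lemma gives infinitely many occurrences almost surely. Running $V$ through a countable basis of $\Sigma_2$, almost surely every tail union $\bigcup_{n \geq N, \, n \in I_{\sigma_1}} \oball{\omega_n^{(2)}}{r_n}$ is open and \emph{dense}, so the slice is residual in the compact space $\Sigma_2$ and in particular non-empty by Baire's theorem; no conditioning and no second moments are needed. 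Fubini then gives that almost surely $\pi_1(E)$ contains a set of full $\mu_1$-measure, whose dimension is at least $t$ by the upper regularity of $\mu_1$ (mass distribution), completing the lower bound exactly as you intended. Note also that the coordinate-subset decomposition makes your density-of-ratios maneuver unnecessary, since it realises any exponent $t \leq \alpha$ directly up to constants.
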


\begin{proof}[Proof that Theorem~\ref{thm2} implies Theorem~\ref{thm1}]
Clearly $\dimh E \leq d$, and using a tail of the sequence of balls as a
cover shows that $\hmeas_\delta^t(E) \leq 2^t \sum_n r_n^t < \infty$
for every $t > t_0$ and $\delta > 0$, so that $\dimh E \leq t_0$.
For the opposite inequality, take $b = 2^d$ and $\gamma = 2\sqrt d$ and $\rho = 1/2$.
Then $\alpha = d$, and the unit cube can be modelled by $\Sigma$ as follows.

Every closed dyadic cube in $[0, 1]^d$ of sidelength $2^{-(n - 1)}$ is
the union of $2^d$ closed dyadic cubes of sidelength $2^{-n}$, which
can be enumerated arbitrarily from $1$ to $2^d$. To every $\sigma$
in $\Sigma$ there is then a corresponding sequence $(D_n(\sigma))$
of closed dyadic cubes, such that for every $n$, the cube $D_n(\sigma)$
has sidelength $2^{-n}$ and is the subcube of $D_{n - 1}(\sigma)$
specified by $\sigma_n$. Define the map $\pi: \Sigma \to [0, 1]^d$ by
$\pi(\sigma) = \bigcap_n D_n(\sigma)$.

By the choice of $\gamma$, the map $\pi$ is a contraction, and the
preimage under $\pi$ of a closed dyadic cube is a ball in $\Sigma$
of the same diameter. Since moreover every ball in the unit cube can be covered
by a bounded number of dyadic cubes of comparable diameter, it
follows that $\dimh A = \dimh \pi A$ for every subset $A$ of $\Sigma$.

Let $\nu$ be the uniform probability measure on $[0, 1]^d$ and
define the probability spaces $(\Omega, \P)$ and $(\Omega', \P')$ by
	\[
	\Omega = \Sigma^\nat,\quad
	\P = \mu^\nat,\qquad
	\Omega' = \left([0, 1]^d\right)^\nat,\quad
	\P' = \nu^\nat.
	\]
Then $\nu$ is the image of $\mu$ under $\pi$, and thus $\P'$ is the image
of $\P$ under the induced map $\pi: \Omega \to \Omega'$.
For every $\omega = (\omega_n)$ in $\Omega$,
	\[
	\pi \left(\limsup_n \oball{\omega_n}{r_n}\right)
	\subset
	\limsup_n \big(\pi \oball{\omega_n}{r_n} \big)
	\subset
	\limsup_n \oball{\pi \omega_n}{r_n},
	\]
using at the second step that $\pi$ is a contraction, and thus
	\[
	\dimh E(\omega) = \dimh \pi E(\omega) \leq \dimh E(\pi\omega).
	\]
It follows that
	\begin{align*}
	\P'\left\{
	\dimh E(\omega') \geq \min\left(t_0, d\right)
	\right\}
	&=
	\P\left\{
	\dimh E(\pi\omega) \geq \min\left(t_0, d\right)
	\right\} \\
	&\geq
	\P\left\{
	\dimh E(\omega) \geq \min\left(t_0, d\right)
	\right\} = 1,
	\end{align*}
assuming Theorem~\ref{thm2} in the last step.
\end{proof}

The proof of Theorem~\ref{thm1} still works if the unit cube is replaced
by another space that has the same structure as $\Sigma$,
for example the middle thirds Cantor set with the uniform probability measure.

\section*{Two lemmas}
For $i = 1, 2$, let $X_i$ be a compact metric
space with a probability measure $\mu_i$.  Let $X = X_1 \times X_2$ with the
product metric given by
	\[
	d\big((x_1, x_2), \, (\widetilde x_1, \widetilde x_2) \big)
	=
	\max\big(
	d_1(x_1, \widetilde x_1), \, d_2(x_2, \widetilde x_2)
	\big);
	\]
thus a ball in $X$ is the product of balls in $X_1$ and $X_2$ of
the same radius. Let $E$ be the random limsup set obtained by choosing
the centres of the balls according to $\mu_1 \times \mu_2$, and
if $x_1 \in X_1$ let $E^{x_1}$ be the fibre of $E$ above $x_1$, that is,
	\[
	E^{x_1} =
	\left\{ x_2 \in X_2; \, (x_1, x_2) \in E \right\}.
	\]

\begin{lemma} \label{ABlemma}
Assume that $\mu_2$ has full support in $X_2$, and let
	\[
	A =
	\Big\{
	x_1 \in X_1; \, \sum_n \mu_1\big(\oball{x_1}{r_n} \big) = \infty
	\Big\}
	\]
and
	\[
	B(\omega) =
	\Big\{
	x_1 \in X_1; \, E^{x_1}(\omega) \text{ is residual}\,
	\Big\}.
	\]
Then almost surely, $\mu_1\left(A \setminus B \right) = 0$.

\begin{proof}
Fix $x_1$ in $A$. Let $V$ be an open subset of $X_2$ and let
$G_n$ be the event that $x_1 \in \oball{\omega_{n, 1}}{r_n}$ and
$\omega_{n, 2} \in V$. Then for every $n$,
	\[
	\P(G_n)
	=
	\P\left\{
	\omega_{n, 1} \in \oball{x_1}{r_n} \text{ and }
	\omega_{n, 2} \in V
	\right\}
	=
	\mu_1\big(\oball{x_1}{r_n}\big) \, \mu_2(V).
	\]
The sum over $n$ diverges since $\mu_2(V) > 0$, so by the Borel--Cantelli
Lemma there is almost surely some $n$ such that $G_n$ occurs. 
When $V$ ranges over a countable basis for the topology of
$X_2$ it follows that almost surely the set
$\{\omega_{n, 2}; \, x_1 \in \oball{\omega_{n, 1}}{r_{n, 1}} \}$
is dense in $X_2$, so that
	\[
	E^{x_1}(\omega) =
	\limsup\left\{ \oball{\omega_{n, 2}}{r_n}; \,
	x_1 \in \oball{\omega_{n, 1}}{r_n}
	\right\}
	\]
is residual. Fubini's theorem now gives
	\begin{align*}
	\E\big( \mu_1(A \setminus B) \big)
	&=
	(\P\times\mu_1) \left\{
	(\omega, x_1); \, x_1 \in A \setminus B(\omega)
	\right\} \\
	&=
	\int \P\left\{x_1 \in A \setminus B(\omega)\right\} \intd\mu(x_1) = 0,
	\end{align*}
which proves the lemma.
\end{proof}
\end{lemma}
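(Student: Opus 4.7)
The plan is to prove the pointwise statement that for each fixed $x_1 \in A$ the fibre $E^{x_1}(\omega)$ is almost surely residual in $X_2$, and then convert this ``for each $x_1$, almost surely'' statement into the required ``almost surely, $\mu_1$-null set of bad $x_1$'' statement via Fubini's theorem. All of the substance lies in the first step; the conversion is essentially automatic.

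For the pointwise statement, fix $x_1 \in A$ and write
\[
E^{x_1}(\omega) = \bigcap_{N} U_N(\omega), \qquad U_N(\omega) = \bigcup_{\substack{n \geq N \\ x_1 \in \oball{\omega_{n,1}}{r_n}}} \oball{\omega_{n,2}}{r_n}.
\]
This presentation is already $G_\delta$, so residuality will follow from almost sure density of each $U_N(\omega)$. To get density, fix a non-empty open $V \subset X_2$ and consider the events ``$x_1 \in \oball{\omega_{n,1}}{r_n}$ and $\omega_{n,2} \in V$''. Independence of the two coordinates of $\omega_n$ and the product structure of the measure make these events independent with probabilities $\mu_1(\oball{x_1}{r_n}) \cdot \mu_2(V)$. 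The factor $\mu_2(V)$ is strictly positive because $\mu_2$ has full support, and the remaining sum diverges because $x_1 \in A$, so Borel--Cantelli delivers infinitely many such $n$ almost surely; the corresponding $\omega_{n,2}$ witness $U_N(\omega) \cap V \neq \emptyset$ for every $N$. Since $X_2$ is compact metric, hence separable, I intersect the almost sure events over a countable basis for the topology to obtain, with full probability, simultaneous density of all the $U_N(\omega)$ and hence residuality of $E^{x_1}(\omega)$.

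For the Fubini step I apply the theorem to the indicator of the joint set $\{(\omega, x_1) : x_1 \in A \setminus B(\omega)\}$. The previous step gives $\P\{x_1 \notin B(\omega)\} = 0$ for every $x_1 \in A$, so integrating against $\mu_1$ yields $\E(\mu_1(A \setminus B)) = 0$, and the conclusion follows. The main obstacle, which is mild, is the double quantifier issue: a single exceptional nullset has to work for all open $V$ simultaneously, and this is precisely what the countable basis supplied by separability takes care of. The only remaining technicality is measurability of the joint set, which follows from the countable Boolean structure exposed above and should not present any real difficulty.
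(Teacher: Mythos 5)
Your proof is correct and follows essentially the same route as the paper's: the second Borel--Cantelli lemma applied for each basic open $V$ (using independence of the $\omega_n$ and full support of $\mu_2$), intersection of the almost sure events over a countable basis to obtain density, and the identical Fubini step to conclude $\mu_1(A \setminus B) = 0$ almost surely. If anything, your explicit $G_\delta$ decomposition $E^{x_1}(\omega) = \bigcap_N U_N(\omega)$ makes the residuality step (infinitely many $n$, hence density of every tail union $U_N$) slightly more careful than the paper's terse phrasing.
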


If $I$ is a subset of $\nat$ let $\Sigma_I = \{1, \ldots, b\}^I$
with the metric
	\[
	d_I(\sigma, \widetilde \sigma) =
	\gamma\rho^{\min\{i \in I; \, \sigma_i \neq \widetilde \sigma_i \}}
	\]
and the probability measure
	\[
	\mu_I = \left( \frac{1}{b} \sum_{q = 1}^b \delta_q \right)^I.
	\]
Then $\Sigma = \Sigma_I \times \Sigma_J$, both as metric spaces and
probability spaces, where $J = \nat \setminus I$.

\begin{lemma} \label{regularlemma}
Given $t \in (0, \alpha]$, let
	\[
	I = \left\{\floor{\frac{\alpha k}{t}}; \, k \in \nat \right\}.
	\]
Then $\Sigma_I$ is $t$-regular, that is, there is a constant $c$
such that for every $\sigma \in \Sigma_I$ and every $r \in (0, \gamma\rho]$,
	\[
	c^{-1} r^t \leq \mu\left(\oball \sigma r \right) \leq c r^t.
	\]

\begin{proof}
Let $(i_k)$ be the strictly increasing enumeration of $I$ given by
$i_k = \floor{\frac{\alpha k}{t}}$. Take $\sigma \in \Sigma_I$ and
$r \in (0, \gamma\rho]$, and let $k$ be such that
	\[
	\gamma\rho^{i_{k + 1}} < r \leq \gamma\rho^{i_k}.
	\]
Then
	\[
	\oball\sigma r = \left\{
	\widetilde\sigma \in \Sigma_I; \,
	\widetilde\sigma_{i_1}\!\ldots\widetilde\sigma_{i_k}
	=
	\sigma_{i_1}\!\ldots\sigma_{i_k}
	\right\},
	\]
and hence
	\[
	\mu\left(\oball\sigma r\right) = b^{-k} = \rho^{\alpha k},
	\]
which is comparable to $r^t$.
\end{proof}
\end{lemma}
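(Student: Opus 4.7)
The plan is to exploit the tree-like ultrametric structure of $\Sigma_I$: every ball is a cylinder set, its $\mu_I$-measure is a simple power of $b$, and the only work is to compare this exact value with $r^t$ by elementary arithmetic.

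First, I would enumerate $I$ in increasing order as $i_1 < i_2 < \cdots$ with $i_k = \floor{\alpha k / t}$. Given $\sigma \in \Sigma_I$ and $r \in (0, \gamma\rho]$, the sequence $(\gamma\rho^{i_k})$ decreases strictly to $0$ starting from $\gamma\rho \geq r$, so there is a unique $k \geq 1$ with $\gamma\rho^{i_{k+1}} < r \leq \gamma\rho^{i_k}$. Unpacking the definition of $d_I$, the ball $\oball{\sigma}{r}$ consists exactly of those $\widetilde\sigma$ that agree with $\sigma$ on the positions $i_1, \ldots, i_k$, and the product structure of $\mu_I$ gives $\mu_I\bigl(\oball{\sigma}{r}\bigr) = b^{-k}$.

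Second, I would translate this exact value into an estimate in terms of $r^t$. Using $\alpha = \log b / (-\log\rho)$, i.e.\ $b = \rho^{-\alpha}$, I would rewrite $b^{-k} = \rho^{\alpha k}$. The defining inequalities for the floor give $t i_k \leq \alpha k < t(i_k + 1)$, and combined with the bounds $\gamma\rho^{i_{k+1}} < r \leq \gamma\rho^{i_k}$ (plus the crude estimate $i_{k+1} \leq \alpha(k+1)/t$) these sandwich $\rho^{\alpha k}$ between constant multiples of $r^t$, with a constant depending only on $\gamma$, $\rho$, and $t$.

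The only real subtlety is that $\rho \in (0,1)$ makes $x \mapsto \rho^x$ decreasing, so upper and lower bounds on exponents swap roles when passed to powers; once this bookkeeping is kept straight, the conclusion is immediate and there is no genuine analytic difficulty.
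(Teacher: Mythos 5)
Your proposal is correct and follows essentially the same route as the paper's proof: identify the ball $\oball{\sigma}{r}$ with the cylinder determined by the coordinates $i_1, \ldots, i_k$ where $\gamma\rho^{i_{k+1}} < r \leq \gamma\rho^{i_k}$, compute $\mu_I\bigl(\oball{\sigma}{r}\bigr) = b^{-k} = \rho^{\alpha k}$, and compare with $r^t$ via the floor inequalities $\alpha k - t < t i_k \leq \alpha k$. You in fact spell out the final comparison (which the paper compresses into ``which is comparable to $r^t$'') in more detail, including the sign bookkeeping caused by $\rho \in (0,1)$, so there is nothing to add.
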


\section*{Proof of Theorem~\ref{thm2}}
The argument showing the the upper bound for $\dimh E$ in
the proof of Theorem~\ref{thm1} is valid also in $\Sigma$.

For the lower bound, let $t \leq \alpha$ be such that $\sum_n r_n^t = \infty$
and decompose $\Sigma$ as $\Sigma = \Sigma_I \times \Sigma_J$, where $I$ is as in
Lemma~\ref{regularlemma} and $J = \nat \setminus I$.
By Lemma~\ref{regularlemma} every ball in $\Sigma_I$ of radius $r$ has
measure at least $c^{-1}r^t$, and therefore the set $A$ in Lemma~\ref{ABlemma}
is the entire space $\Sigma_I$. Thus Lemma~\ref{ABlemma}
implies that almost surely the set $B$ has full $\mu_I$-measure, and
consequently $\dimh B \geq t$ since every ball in $\Sigma_I$ of radius $r$
has $\mu_I$-measure at most $c r^t$ by Lemma~\ref{regularlemma}. If
$\sigma_1 \in B$ then the fibre $E^{\sigma_1}$ is non-empty, and hence
$B \subset \pi_I E$ for every realisation of $B$ and $E$. Thus almost surely,
	\[
	\dimh E \geq \dimh \left(\pi_I E \right)
	\geq \dimh B \geq t.
	\]
Letting $t \to \min(t_0, \alpha)$ from below along a countable set
proves the almost sure lower bound for $\dimh E$.

\bibliographystyle{plain}
\bibliography{references}

\end{document}